\documentclass[12pt]{amsart}
\usepackage{amsfonts}
\usepackage{amsthm}
\usepackage{amsmath}
\usepackage{amssymb}
\usepackage{mathtools}
\usepackage{amscd}
\usepackage[latin2]{inputenc}
\usepackage{t1enc}
\usepackage[mathscr]{eucal}
\usepackage{indentfirst}
\usepackage{graphicx}
\usepackage{graphics}
\usepackage{pict2e}
\usepackage{epic}
\numberwithin{equation}{section}
\usepackage[margin=3.5cm]{geometry}
\usepackage{epstopdf} 
\usepackage{commath}

\theoremstyle{plain}
\newtheorem{Th}{Theorem}[section]
\newtheorem{Lem}[Th]{Lemma}

\theoremstyle{definition}

\newtheorem{Que}{Question}

\newtheorem{?}[Th]{Problem}

\begin{document}

\title{Homology handles with trivial Alexander polynomial}

\author{Dongsoo Lee}

\address{Department of Mathematics, Michigan State University, East Lansing, MI, 48824} 

\email{leedon56@msu.edu}

 \subjclass[2010]{57N70, 57M10.}

 \keywords{Homology handles, $\widetilde{H}$-cobordism} 

\begin{abstract}
Using Freedman and Quinn's result for $\mathbb{Z}$-homology 3-spheres, we show that a 3-dimensional homology handle with trivial Alexander polynomial bounds a homology $S^1\times D^3$. As a consequence, a distinguished homology handle with trivial Alexander polynomial is topologically null $\widetilde{H}$-cobordant. 

\end{abstract}

\maketitle

\section{Introduction}
In 1976, Kawauchi introduced the smooth $\widetilde{H}$-cobordism group $\Omega(S^1\times S^2)$, whose elements are equivalence classes of distinguished homology handles.  A {\em distinguished homology handle} is a pair $(M, \alpha)$ of a compact, oriented 3-manifold $M$ having the homology of $S^1\times S^2$, and a specified generator $\alpha$ of $H_1(M;\mathbb{Z})\cong \mathbb{Z}$. The equivalence relation is $\widetilde{H}$-cobordism, which means that two distinguished homology handles $(M_0, \alpha_0)$ and $(M_1, \alpha_1)$ are {\em $\widetilde{H}$-cobordant} if there is a pair $(W, \varphi)$ of a smooth connected 4-dimensional cobordism $W$ between $M_0$ and $M_1$, and a first cohomology class $\varphi \in H^1(W;\mathbb{Z})$ such that 
\begin{enumerate}
\item $\varphi_{|_{M_i}}$ are dual to $\alpha_i$ for $i=0, 1$,
\item $H_*(\widetilde{W}_\varphi;\mathbb{Q})$ is finitely generated over $\mathbb{Q}$ for each $*$, where $\widetilde{W}_\varphi$ is the infinite cyclic covering of $W$ associated with $\varphi$.
\end{enumerate}
In this case, $(W, \varphi)$ (or simply $W$) is called a {\em smooth $\widetilde{H}$-cobordism} between $(M_0, \alpha_0)$ and $(M_1, \alpha_1)$ (or between $M_0$ and $M_1$).
If $(M, \alpha)$ is $\widetilde{H}$-cobordant to $(S^1\times S^2, \alpha_{\scriptscriptstyle S^1\times S^2})$, where $\alpha_{\scriptscriptstyle S^1\times S^2}$ is the homology class of $S^1\times*$ with a fixed orientation, then we say that $(M, \alpha)$ is {\em null $\widetilde{H}$-cobordant}, and $(W^+, \varphi)$ (or $W^+$) is a {\em null $\widetilde{H}$-cobordism} of $(M, \alpha)$ (or of $M$). Equivalently, there is a smooth $\widetilde{H}$-cobordism $(W^+, \varphi)$ with $\partial W^+ = M$. Under a sum operation $\bigcirc$ called the {\em circle union}, $\Omega(S^1\times S^2)$ is an abelian group, and $[(S^1\times S^2, \alpha_{\scriptscriptstyle S^1\times S^2})]$ plays the role of the identity. Furthermore, the inverse $-[(M, \alpha)]$ of $[(M, \alpha)]$ is $[(-M, \alpha)]$, where $-M$ is $M$ with a reversed orientation. For details, see \cite{K1} and \cite{Lee}.\

\smallskip
Likewise, we can define the topological $\widetilde{H}$-cobordism group $\Omega^{top}(S^1\times S^2)$ in the topological category by using topological 4-manifolds in the definition of $\widetilde{H}$-cobordism. There is a natural surjective homomorphism $\psi:\Omega(S^1\times S^2) \to \Omega^{top}(S^1\times S^2)$ by forgetting smooth structures.\

\smallskip

Results in knot concordance motivate a number of questions on the $\widetilde{H}$-cobordism groups $\Omega(S^1\times S^2)$ and $\Omega^{top}(S^1\times S^2)$.\

In the knot concordance group $\mathcal{C}$, let $\mathcal{C}_\Delta$ be the subgroup generated by knots with trivial Alexander polynomial, and $\mathcal{C}_T$ the subgroup generated by topologically slice knots. Using Donaldson's diagonalization theorem \cite{D}, Casson observed that there are knots with trivial Alexander polynomial but which are not smoothly slice (appearing in \cite{CG}). After Donaldson's result, Freedman proved that a knot with trivial Alexander polynomial is topologically slice \cite{F}, \cite{FQ}. Thus $\mathcal{C}_{\Delta}\subset\mathcal{C}_{T}$ and $\mathcal{C}_{T}$ is non trivial, i.e., the map $\mathcal{C}\to \mathcal{C}^{top}$ is not injective, where $\mathcal{C}^{top}$ is the topologically flat knot concordance group.\
\smallskip

One can expect similar results in the $\widetilde{H}$-cobordism groups. Let $\Omega_{\Delta}$ be the subgroup generated by distinguished homology handles with trivial Alexander polynomial, and $\Omega_T$ the kernel of the map $\psi: \Omega(S^1\times S^2) \to \Omega^{top}(S^1\times S^2)$.

\begin{Que}\label{Q1}
Is $\psi:\Omega(S^1\times S^2)\to\Omega^{top}(S^1\times S^2)$ injective?
\end{Que}

\begin{Que}\label{Q2}
Is $\Omega_{\Delta} \subset \Omega_{T}$?
\end{Que}

In this paper, using the work of Freedman and Quinn, we prove the following theorem, which is the positive answer to Question \ref{Q2}.

\newtheorem*{MT}{Theorem 1}\label{main}
\begin{MT}
A distinguished homology handle with trivial Alexander polynomial is topologically null $\widetilde{H}$-cobordant.
\end{MT}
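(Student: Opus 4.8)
The plan is to reduce the theorem to the single geometric statement highlighted in the abstract: that a homology handle $M$ with trivial Alexander polynomial $\Delta_M=1$ bounds a topological homology $S^1\times D^3$. First I would verify that this suffices. Suppose $W$ is a compact topological $4$-manifold with $\partial W = M$ and $H_*(W;\mathbb{Z})\cong H_*(S^1;\mathbb{Z})$. The long exact sequence of the pair $(W,M)$ together with Poincar\'e--Lefschetz duality shows that the inclusion induces an isomorphism $i_*:H_1(M)\to H_1(W)$. Let $\varphi$ be a generator of $H^1(W;\mathbb{Z})\cong\mathbb{Z}$; dualizing $i_*$ shows that $\varphi|_M$ is, up to sign, dual to $\alpha$, and we orient so that it is exactly dual to $\alpha$. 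Since $W$ is a rational homology circle, the infinite cyclic cover $\widetilde{W}_\varphi$ is a rational homology point, so $H_*(\widetilde{W}_\varphi;\mathbb{Q})$ is finitely generated over $\mathbb{Q}$. Hence $(W,\varphi)$ is a null $\widetilde{H}$-cobordism of $(M,\alpha)$ in the sense of the ``$\partial W^+=M$'' formulation, and $(M,\alpha)$ is topologically null $\widetilde{H}$-cobordant.

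It remains to construct $W$. I would first present $M$ as $0$-surgery on a knot in a $\mathbb{Z}$-homology $3$-sphere. Represent $\alpha$ by an embedded curve $\gamma\subset M$; since $\alpha$ generates $H_1(M)$, it bounds a ``Seifert surface'', a closed oriented surface meeting $\gamma$ transversally in a single point, which supplies a preferred ($0$-)framing of $\gamma$. Surgery on $\gamma$ with this framing kills $H_1$ and, by Poincar\'e duality, also $H_2$, producing a $\mathbb{Z}$-homology sphere $\Sigma$; the core of the reglued solid torus is a knot $K\subset\Sigma$, and by construction $M$ is recovered as $0$-surgery on $K$. A standard computation identifies the Alexander polynomial of the homology handle $M$ with the Alexander polynomial of $K$ in $\Sigma$, so the hypothesis $\Delta_M=1$ becomes $\Delta_K=1$.

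Now the work of Freedman and Quinn enters twice. By Freedman's theorem, the $\mathbb{Z}$-homology sphere $\Sigma$ bounds a compact contractible topological $4$-manifold $B$. Since $\Delta_K=1$ and the fundamental group of the relevant complement is infinite cyclic, hence a good group, the topological slicing theorem (in the homology-sphere setting) produces a locally flat disk $D\subset B$ with $\partial D=K$ and $\pi_1(B\setminus\nu D)\cong\mathbb{Z}$. I would then set $W=B\setminus\nu D$. A Mayer--Vietoris and Alexander-duality computation shows that removing a properly embedded disk from a contractible $4$-manifold yields a homology $S^1\times D^3$, with the meridian of $D$ generating $H_1$; moreover the boundary $\partial W=(\Sigma\setminus\nu K)\cup(D\times S^1)$ is precisely the $0$-surgery on $K$, namely $M$. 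This furnishes the required $W$ and closes the argument.

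I expect the main obstacle to be the topological slicing step: upgrading $\Delta_K=1$ to an actual locally flat slice disk for $K$ inside the fake ball $B$, with infinite cyclic complementary fundamental group. This is exactly where the disk-embedding machinery of Freedman and Quinn for good fundamental groups is indispensable, and it is also what forces the argument into the topological category. The remainder is homological bookkeeping: checking that the Seifert-framed surgery yields a $\mathbb{Z}$-homology sphere with $M$ as $0$-surgery on the dual knot, matching the two Alexander polynomials, and computing the homology of the disk complement.
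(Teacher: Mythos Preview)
Your approach is correct and rests on the same Freedman--Quinn input as the paper, but your construction of the bounding homology $S^1\times D^3$ is more direct. The paper glues the trace $X$ of the surgery $M\leadsto Y$ to the contractible manifold $W'$ to form $W''$ (a homology $D^2\times S^2$), identifies the locally flat sphere $S=(\text{cocore})\cup D$ inside it, checks that $[S]$ generates $H_2(W'')$ with self-intersection zero (so the normal bundle is trivial), and then surgers $S$ out---replacing $T(S)\cong S^2\times D^2$ by $D^3\times S^1$---to obtain $W$; three lemmas handle the homology of $W''$, the class $[S]$, and the homology after surgery. You instead take $W=B\setminus\nu D$ and observe that its boundary is the $0$-surgery on $K$ (once one checks that the disk framing is the Seifert framing, which holds because a pushoff of $K$ bounds a parallel copy of $D$ in the complement); a single Mayer--Vietoris then gives $H_*(W)\cong H_*(S^1)$. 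Your route is shorter; the paper's makes the ambient $4$-manifold and the sphere class explicit before surgering.

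Two small points. Your phrase ``it bounds a Seifert surface'' is misworded: the generator $\gamma$ does not bound anything; rather there is a closed surface Poincar\'e-dual to $\gamma$ supplying the framing, as your parenthetical indicates. And the claim that $\widetilde W_\varphi$ is a rational homology \emph{point} is stronger than what you justify or need. What actually follows from $W$ being a homology circle (this is Milnor's Assertion~5, or equivalently the Wang sequence together with finite generation of $H_*(\widetilde W_\varphi;\mathbb{Q})$ over $\mathbb{Q}[t,t^{-1}]$) is that each $H_i(\widetilde W_\varphi;\mathbb{Q})$ is finite-dimensional over $\mathbb{Q}$, which is exactly the $\widetilde H$-cobordism condition. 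The extra hypothesis $\pi_1(B\setminus\nu D)\cong\mathbb{Z}$ you extract from Freedman--Quinn is true but not used anywhere in your argument.
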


We expect a negative answer to Question \ref{Q1}. Then one can also ask about $\Omega_{T}/\Omega_{\Delta}$.
\begin{Que}
How big is the gap between two groups if $\Omega_{T}/\Omega_{\Delta}$ is non-trivial?
\end{Que}

In the knot concordance group $\mathcal{C}$, Hedden, Livingston, and Ruberman showed that $\mathcal{C}_T/\mathcal{C}_{\Delta}$ contains a $\mathbb{Z}^{\infty}$-subgroup \cite{HLR}, and Hedden, Kim and Livingston showed that it also has a $\mathbb{Z}_2^{\infty}$-subgroup \cite{HKL}.

\section*{Acknowledgement}
I would like to thank my advisor Matthew Hedden for an enlightening email discussion.

\section{Alexander polynomial of homology handles}\label{s2}
In this section, we review Alexander polynomial of homology handles. We refer the reader to \cite{K1}, \cite{K2}, and \cite{M1} for more details.\

\smallskip

Let $M$ be an oriented homology handle. Then we have the infinite cyclic covering $\widetilde{M}$ of $M$ associated with the abelianization map $\pi_1(M) \twoheadrightarrow H_1(M)\cong \mathbb{Z}$. Let $t$ be a generator of the deck transformation group $\mathbb{Z}$ of the covering space. Since $M$ is compact and triangulable, it admits a finite CW-complex, and thus the chain complex $C_i(\widetilde{M};\mathbb{Z})$ can be considered as a free and finitely generated module over the group ring $\Lambda=\mathbb{Z}[\mathbb{Z}]=\mathbb{Z}[t,t^{-1}]$, with one generator for each $i$-cell of $M$. Since the group ring $\Lambda$ is Noetherian, one can see that the homology $H_i(\widetilde{M};\mathbb{Z})$ is a finitely generated module over $\Lambda$. For an exact sequence $E\to F\to H_1(\widetilde{M};\mathbb{Z})\to 0$ of $\Lambda$-modules with $E$ and $F$ free modules of finite rank, a {\em presentation matrix} $P$ is a matrix representing the homomorphism $E\to F$. If the rank of $F$ is $r\ge1$, then the {\em first elementary ideal} $\mathcal{E}$ of $P$ is the ideal over $\Lambda$ generated by all the $r\times r$ minors of $P$. If there are no $r\times r$ minors, then we have $\mathcal{E}=0$, and if $r=0$, then we set $\mathcal{E}=\Lambda$. The {\em Alexander polynomial} of $M$ is defined to be any generator $\bigtriangleup_{M}(t)$ of the smallest principal ideal over $\Lambda$ containing $\mathcal{E}$.\

\smallskip

{\em Another description}. Let $\mu$ be a smoothly embedded simple closed oriented curve in $M$ representing a generator of $H_1(M;\mathbb{Z})$. Let $T(\mu)$ be a tubular neighborhood of $\mu$. We choose simple closed oriented smooth curves $K$ and $l$ in $\partial T(\mu)$ intersecting in a single point so that $l$ is homologous to $\mu$ in $T(\mu)$, and $K$ bounds a disk in $T(\mu)$ with ${\rm lk}(\mu,K)=+1$. Note that the choice of a curve $l$ is not unique. Choose a diffeomorphism $h:S^1\times S^1\to \partial T(\mu)$ such that $h(S^1\times0)=l$ and $h(0\times S^1)=K$. Let $Y=(M\setminus {\rm Int}T(\mu))\cup_{h}(D^2\times S^1)$. Then $Y$ is a  $\mathbb{Z}$-homology 3-sphere, and $K$ is a knot in $Y$. The {\em Alexander polynomial} $\bigtriangleup_{M}(t)$ of $M$ is defined to be the Alexander polynomial $\bigtriangleup_{K}(t)$ of $K$ in $Y$.\

\smallskip

Both definitions agree with the following: Let $A$ be a Seifert matrix for a knot $K$ in $Y$. We know that $tA-A^{T}$ is a presentation matrix for the $\Lambda$-module $H_1(\widetilde{X(K)},\mathbb{Z})$, where $X(K)$ is a knot exterior of $K$ in $Y$, and $\widetilde{X(K)}$ is the infinite cyclic coverings of $X(K)$. Let $Y_0(K)$ be the 3-manifold obtained from $Y$ by 0-surgery along $K$ in $Y$. Then we have a canonical isomorphism $H_1(\widetilde{X(K)};\mathbb{Z})\cong H_1(\widetilde{Y_0(K)};\mathbb{Z})$. In fact, $Y_0(K)\cong M$ as the two surgeries along $\mu$ and $K$ are dual to each other. So, $tA-A^{T}$ is also a presentation matrix for the $\Lambda$-module $H_1(\widetilde{M};\mathbb{Z})$. The matrix $tA-A^{T}$ is a square matrix, so by definition $\bigtriangleup_{M}(t)={\rm det}(tA-A^{T})=\bigtriangleup_{K}(t)$.

\section{proof of Theorem \ref{main}}
{\em Throughout this section, homologies are all over $\mathbb{Z}$.}\

\smallskip
Let $M$ be an oriented homology handle, so that its homology groups are isomorphic to those of $S^1 \times S^2$, and suppose that $\bigtriangleup_{M}(t)=1$. We will use the same notation as Section \ref{s2}. By attaching a 2-handle $D^2\times D^2$ to the boundary $M\times 0$ of $M\times [0,1]$ along $\mu$ with a framing determined by the curve $l$, we obtain a cobordism $X=(M\times [0,1])\cup_{l-{\rm framing}}(D^2\times D^2)$ between $Y$ and $M$. Then $K$ is a knot in $Y$ with Alexander polynomial $\bigtriangleup_{K}(t)=\bigtriangleup_{M}(t)=1$. By \cite[11.7B Theorem]{FQ}, there is a pair $(W^{'}, D)$, where $W^{'}$ is a contractible topological 4-manifold, and $D$ is a locally flat 2-disk properly embedded in $W^{'}$ such that $\partial(W^{'}, D)=(Y, K)$. By stacking $X$ to $W^{'}$ along $Y$, we obtain a topological 4-manifold $W^{''}=X\cup_{Y}W^{'}$, which $M$ bounds. Furthermore, we obtain a locally flat 2-sphere $S$ in $W^{''}$ from the union of the cocore of the 2-handle and the locally flat 2-disk $D$.

\begin{Lem}\label{Lem1}
The 4-manifold $W^{''}$ has the homology of $D^2\times S^2$.
\end{Lem}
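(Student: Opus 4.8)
The plan is to compute the homology of $W''$ by assembling it from its two pieces $X$ and $W'$ via a Mayer–Vietoris argument, using that $W'$ is contractible. Since $W' \simeq *$ is contractible and $Y$ is a $\mathbb{Z}$-homology 3-sphere (so $\widetilde H_*(Y)=0$), the Mayer–Vietoris sequence for $W'' = X \cup_Y W'$ immediately gives $H_*(W'') \cong H_*(X)$ for all $*$. So the whole computation reduces to understanding the homology of the cobordism $X = (M\times[0,1]) \cup_{l\text{-framing}} (D^2\times D^2)$.

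Next I would compute $H_*(X)$ directly from its handle description. Starting from $M\times[0,1]$, which deformation retracts to $M$ (so it has the homology of $S^1\times S^2$: $H_0=H_1=H_2=H_3=\mathbb{Z}$), I attach a single $2$-handle along the curve $\mu$, which is a generator of $H_1(M)\cong\mathbb{Z}$. The effect on homology is governed by the cellular chain complex: the attaching sphere $\mu$ represents a generator of $H_1$, so attaching the $2$-handle kills $H_1$ and creates a relation. Concretely, I would write the relative homology $H_*(X, M\times 0)$, which by excision equals $H_*(D^2\times D^2, S^1\times D^2)$, and this is $\mathbb{Z}$ concentrated in degree $2$ (generated by the core $D^2\times 0$ rel boundary). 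Running the long exact sequence of the pair $(X, M\times 0)$ and using that the boundary map sends the degree-$2$ generator to $[\mu]$, a generator of $H_1(M)$, I expect to obtain $H_0(X)=\mathbb{Z}$, $H_1(X)=0$, $H_2(X)=\mathbb{Z}$, $H_3(X)=\mathbb{Z}$ — matching $H_*(D^2\times S^2)$.

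The main obstacle I anticipate is pinning down the boundary map in the long exact sequence precisely, i.e.\ verifying that attaching the $2$-handle along $\mu$ (which is the generator dual to $\alpha$) really does kill $H_1$ cleanly and interacts correctly with $H_2(M)$ and $H_3(M)$. The framing chosen by the curve $l$ should not affect the integral homology groups themselves (framing only affects the quadratic/intersection form), so I would note that the framing is irrelevant for this lemma. One subtlety to check is that the new $H_2$ class and the surviving $H_2$, $H_3$ of $M$ assemble into exactly $\mathbb{Z}$ in each of degrees $2$ and $3$ without extra torsion; this follows because the relevant connecting homomorphism is an isomorphism onto the $\mathbb{Z}$ summand $H_1(M)$, so the long exact sequence splits into short pieces that compute out to $\mathbb{Z}$ in degrees $0,2,3$ and $0$ in degree $1$.

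In summary, the strategy is: \emph{(i)} reduce $H_*(W'')$ to $H_*(X)$ using contractibility of $W'$ and $\widetilde H_*(Y)=0$ via Mayer–Vietoris; \emph{(ii)} compute $H_*(X)$ from the long exact sequence of $(X, M\times 0)$ together with the excision isomorphism $H_*(X,M\times 0)\cong H_*(D^2\times D^2, S^1\times D^2)$; and \emph{(iii)} identify the connecting map with ``cap with $[\mu]$'' to conclude $H_*(X)\cong H_*(D^2\times S^2)$. The result $H_*(W'')\cong H_*(D^2\times S^2)$ then follows. Poincaré–Lefschetz duality (with $\partial W'' = M$) can serve as an independent consistency check on the answer.
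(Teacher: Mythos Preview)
Your proposal contains a genuine error that propagates through the argument. You write that $Y$ is a $\mathbb{Z}$-homology $3$-sphere ``so $\widetilde H_*(Y)=0$,'' but this is false: a homology $3$-sphere has $\widetilde H_3(Y)\cong\mathbb{Z}$. Consequently the Mayer--Vietoris sequence for $W''=X\cup_Y W'$ does \emph{not} give $H_*(W'')\cong H_*(X)$ in all degrees. In degrees $3$ and $4$ the sequence reads
\[
0\longrightarrow H_4(W'')\longrightarrow H_3(Y)\longrightarrow H_3(X)\oplus H_3(W')\longrightarrow H_3(W'')\longrightarrow 0,
\]
with $H_3(Y)\cong\mathbb{Z}$, $H_3(X)\cong\mathbb{Z}$, $H_3(W')=0$. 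To conclude $H_3(W'')=H_4(W'')=0$ you must show that $H_3(Y)\to H_3(X)$ is an isomorphism, which is exactly the extra step the paper carries out (by comparing the inclusions $Y\hookrightarrow X\hookleftarrow M$ and observing that both fundamental classes map to the same generator of $H_3(X)$).

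This error is compounded by a second one: you assert that $H_3(X)=\mathbb{Z}$ ``matches $H_*(D^2\times S^2)$,'' but $D^2\times S^2\simeq S^2$ has $H_3=0$. So your computation of $H_*(X)$ is correct, but $X$ does \emph{not} have the homology of $D^2\times S^2$; it is only after gluing in the contractible $W'$ along $Y$ that the $H_3$ gets killed. Your step~(ii), computing $H_*(X)$ via the long exact sequence of the pair $(X,M)$ with excision, is a perfectly good alternative to the paper's Mayer--Vietoris computation of $H_*(X)$, but you still need the missing analysis of the map $H_3(Y)\to H_3(X)$ to finish.
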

\begin{proof}
First, we compute the homology of $X$, which is obtained from $M\times [0,1]$ by attaching a 2-handle $D^2\times D^2$. The attaching region is a tubular neighborhood of $\mu$, and is homeomorphic to $S^1\times D^2$. From the Mayer-Vietoris sequence, we have the following: 
\begin{alignat*}{2} \tag{1}
\cdots & \to H_{i}(S^1\times D^2)\to H_{i}(M\times [0,1]) & {}\oplus H_{i}(D^2\times D^2) & \to H_{i}(X) \\
       & \to H_{i-1}(S^1\times D^2)\to\cdots  & \cdots          & \to H_{0}(X)\to 0.
\end{alignat*}
Note that $H_1(S^1\times D^2)\to H_1(M\times[0,1])$ is an isomorphism and $H_0(S^1\times D^2)\to H_0(M\times[0,1])\oplus H_0(D^2\times D^2)$ is injective. Then it is easy to find that $H_i(X) \cong \mathbb{Z}$ if $i=0, 2, 3$, and trivial otherwise.\

Next, we compute the homology of $W^{''}$ using the Mayer-Vietoris sequence as follows. Since the intersection between $X$ and $W^{'}$ is $Y$, we have the following: 
\begin{alignat*}{2}
\cdots & \to H_{i}(Y)\to H_{i}(X) & {}\oplus H_{i}(W^{'}) & \to H_{i}(W^{''}) \\
       & \to H_{i-1}(Y)\to\cdots  & \cdots          & \to H_{0}(W^{''})\to 0.
\end{alignat*}
Note that $H_i(Y)\cong H_i(S^3)$ and $H_i(W^{'})\cong H_i(B^4)$. Since the map $H_0(Y) \to H_0(X)\oplus H_0(W^{'})$ is injective, we have $H_0(W^{''})\cong\mathbb{Z}$, $H_1(W^{''})\cong 0$, and $H_2(W^{''})\cong \mathbb{Z}$. Considering the maps $H_3(Y)\to H_3(X) \gets H_3(M)$ induced by inclusions, the right map is an isomorphism from the long exact sequence (1), and the images of two maps are homologous in $H_3(X)$. Then the left map is also an isomorphism, and thus $H_3(W^{''})\cong H_4(W^{''})\cong 0$.\

\end{proof}

\begin{Lem}\label{Lem2}
The locally flat 2-sphere $S$ represents a generator of $H_2(W^{''})$, and its self-intersection $S\cdot S$ is $0$.
\end{Lem}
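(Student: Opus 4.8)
The plan is to establish the two assertions separately: that $[S]$ generates $H_2(W'')\cong\mathbb{Z}$, and that the normal Euler number $S\cdot S$ vanishes. For the first I would pair $S$ against a naturally occurring relative class using Lefschetz duality; for the second I would show homologically that $[S]$ is carried by the boundary $M$, so that its self-intersection is forced to be zero.

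First I would treat the generator claim. The core $P=D^2\times 0$ of the attached $2$-handle is a properly embedded disk in $X\subset W''$ with $\partial P=\mu\subset M=\partial W''$, so it determines a class $[P]\in H_2(W'',M)$. By Lemma \ref{Lem1} we have $H_1(W'')=0$ and $H_2(W'')\cong\mathbb{Z}$, and Lefschetz duality together with the universal coefficient theorem gives $H_2(W'',M)\cong H^2(W'')\cong\mathbb{Z}$. In the long exact sequence of the pair, the connecting map $H_2(W'',M)\to H_1(M)$ is surjective (its cokernel injects into $H_1(W'')=0$), hence an isomorphism of infinite cyclic groups; since $\partial[P]=[\mu]$ generates $H_1(M)$, the class $[P]$ generates $H_2(W'',M)$. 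Now I would compute the intersection number $[S]\cdot[P]$ geometrically: $S$ is the union of the cocore $C=0\times D^2$ and the slice disk $D\subset W'$, the cocore meets the core transversally in the single point $0\times 0$, and $D$ and $P$ lie in the disjoint pieces $W'$ and $X$ and so do not meet. Thus $[S]\cdot[P]=\pm1$. Since the Lefschetz pairing $H_2(W'')\times H_2(W'',M)\to\mathbb{Z}$ is unimodular and $[P]$ generates the second factor, pairing with $[P]$ is an isomorphism $H_2(W'')\xrightarrow{\sim}\mathbb{Z}$; as it sends $[S]$ to $\pm1$, the class $[S]$ generates $H_2(W'')$.

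For the self-intersection I would use the same exact sequence. From $H_2(M)\to H_2(W'')\to H_2(W'',M)\xrightarrow{\partial}H_1(M)\to H_1(W'')=0$, the fact that $\partial$ is an isomorphism (shown above) forces $H_2(M)\to H_2(W'')$ to be surjective, and being a surjection $\mathbb{Z}\to\mathbb{Z}$ it is an isomorphism. Hence every class of $H_2(W'')$, and in particular $[S]$, is the image of a class carried by the boundary $M$. A homology class represented by a surface lying in $\partial W''$ can be pushed into a collar $M\times[0,1)$ and there disjointly pushed off itself, so it has zero homological self-intersection. Because $S$ is locally flat it has a normal bundle by the work of Freedman and Quinn, and its Euler number equals the homological self-intersection $[S]\cdot[S]$; therefore $S\cdot S=0$.

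The part I expect to require the most care is the geometric input to the generator claim, namely verifying that $S$ and $P$ meet in a single transverse point in the topological setting and that the Lefschetz intersection pairing is genuinely unimodular for the topological $4$-manifold-with-boundary $W''$. An alternative, more hands-on route to $S\cdot S=0$ is to compute it as the difference of the two framings that $C$ and $D$ induce on $K\subset Y$: the slice disk $D$ induces the Seifert framing (its normal bundle is canonically trivialized over a disk, and the pushoff bounds a disjoint disk in $W'$), while the cocore induces the framing recorded by the handle attachment, which the choice of $l$ (homologous to $\mu$, with ${\rm lk}(\mu,K)=+1$) arranges to be the Seifert framing as well. The homological argument above is preferable because it sidesteps this framing bookkeeping entirely, deriving $S\cdot S=0$ directly from the homology of $W''$ computed in Lemma \ref{Lem1}.
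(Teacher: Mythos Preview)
Your argument is essentially the paper's own proof: the paper constructs the same relative disk (there called $\alpha$), observes it meets $S$ in a single point, shows via the connecting map $\partial$ that it generates $H_2(W'',M)$, and obtains $S\cdot S=0$ from the fact that $H_2(M)\to H_2(W'')$ is an isomorphism so the intersection form vanishes. One small correction: the bare core $P=D^2\times 0$ has boundary $\mu\times\{0\}$, which after surgery and capping by $W'$ lies in the \emph{interior} of $W''$, not in $\partial W''=M\times\{1\}$; you must extend $P$ by the annulus $\mu\times[0,1]$ to get a properly embedded disk with $\partial=\mu\subset M$, which is exactly the paper's $\alpha$, and the rest of your argument then goes through unchanged.
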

\begin{proof}
Let $\alpha$ be the 2-disk obtained from the union of $\mu\times[0,1]$ and the core of the 2-handle. Then its boundary is $\mu$ in $\partial W^{''}=M$, and it intersects $S$ in a single point. Thus, to show that $S$ represents a generator of $H_2(W^{''})$, it suffices that $\alpha$ represents a generator of a $\mathbb{Z}$-summand of $H_2(W^{''},\partial W^{''})$. Note that $H_2(M)\cong H_2(M\times [0,1])\cong H_2(X)\cong H_2(W^{''})$ from long exact sequences in the proof of Lemma \ref{Lem1}. We consider the long exact sequence of the pair $(W^{''}, \partial W^{''})$:\ 
$$H_2(M)\to H_2(W^{''})\xrightarrow{[A]} H_2(W^{''}, M) \xrightarrow{\partial} H_1(M)\to 0.$$
It is well-known that the map $[A]$ is represented by an intersection form $A$ of $H_2(W^{''})$ with respect to some basis since $\partial W^{''}\neq \emptyset$ and $H_1(W^{''})$ is trivial, see \cite[\S3]{GL}.
Because the first map is an isomorphism, the intersection form $A$ is trivial and $H_2(W^{''}, M)\cong H_1(M)\cong \mathbb{Z}$. Since $\partial[\alpha]=[\mu]$ and $[\mu]$ is a generator of $H_1(M)$, $\alpha$ represents a generator of $H_2(W^{''}, M)$. \
\end{proof}

Since $S$ is locally flat, it has a normal bundle by \cite[\S9.3]{FQ}, and hence it has a tubular neighborhood $T(S)$ in $W^{''}$. The normal bundle over $S$ is determined by its Euler number, which equals the algebraic intersection number between the 0-section and any other section transverse to it. By Lemma \ref{Lem2}, $S\cdot S=0$. Thus, the normal bundle is trivial, and $T(S)$ is homeomorphic to $S^2\times D^2$. Let $\psi:S^2\times D^2 \to W^{''}$ be an embedding of the tubular neighborhood of $S$. Let $X(S)$ be the exterior of the sphere $S$, i.e., $X(S)=W^{''}\backslash{\rm Int}T(S)$. Let $W$ be the 4-manifold obtained from $X(S)$ by gluing in $D^3\times S^1$ back along the boundary $S^2\times S^1$ of $X(S)$. That is, $W=X(S)\cup_{\psi|_{(S^2\times S^1)}}(D^3\times S^1)$.

\begin{Lem}
The 4-manifold $W$ has the homology of $D^3\times S^1$.
\end{Lem}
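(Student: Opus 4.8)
The plan is to compute $H_*(W)$ by two successive applications of the Mayer--Vietoris sequence. First I would determine the homology of the exterior $X(S)$ from the decomposition $W''=X(S)\cup_{S^2\times S^1}T(S)$, feeding in Lemmas \ref{Lem1} and \ref{Lem2}; then I would insert the result into the Mayer--Vietoris sequence for $W=X(S)\cup_{\psi|_{(S^2\times S^1)}}(D^3\times S^1)$ and read off $H_*(W)$. Throughout I will substitute $H_*(T(S))=H_*(S^2\times D^2)\cong H_*(S^2)$, $H_*(D^3\times S^1)\cong H_*(S^1)$, $H_*(S^2\times S^1)$ from K\"unneth, and $H_*(W'')\cong H_*(D^2\times S^2)$ from Lemma \ref{Lem1}.

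For the first step, the only real content is the identification of generators in the sequence for $W''=X(S)\cup T(S)$ with intersection $S^2\times S^1$. Since the core sphere $S$ lies entirely inside $T(S)$ and, by Lemma \ref{Lem2}, represents a generator of $H_2(W'')$, the connecting map $H_2(W'')\to H_1(S^2\times S^1)$ vanishes; exactness then gives $H_1(X(S))\cong\mathbb{Z}$, generated by the meridian $\mathrm{pt}\times S^1$, and from the top of the sequence $H_3(X(S))\cong\mathbb{Z}$, generated by $[S^2\times S^1]$. The key degree is $2$: because $[S]$ \emph{generates} (not merely lies in) $H_2(W'')$, the restriction of the surjection $H_2(X(S))\oplus H_2(T(S))\to H_2(W'')$ to the $T(S)$-summand is already an isomorphism, so exactness identifies the kernel with $H_2(X(S))$ through the first projection and shows that the parallel sphere $[S_+]$ (the image of $[S^2\times\mathrm{pt}]$ under $S^2\times S^1\hookrightarrow X(S)$) generates $H_2(X(S))\cong\mathbb{Z}$. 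Thus $X(S)$ has homology $\mathbb{Z},\mathbb{Z},\mathbb{Z},\mathbb{Z},0$ in degrees $0,1,2,3,4$, with these explicit generators.

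For the second step, the geometric input I would record is how $D^3\times S^1$ is glued back: the map $\psi|_{S^2\times S^1}$ sends the $S^1$-factor of $D^3\times S^1$ to the meridian $\mathrm{pt}\times S^1$ of $S$ and caps the sphere $S^2\times\mathrm{pt}$ by $D^3\times\mathrm{pt}$. In the Mayer--Vietoris sequence for $W$ with intersection $S^2\times S^1$, the inclusion-induced map $f_*$ into $H_*(X(S))\oplus H_*(D^3\times S^1)$ then behaves as follows: since $H_2(D^3\times S^1)=0$, $f_2$ is just the isomorphism carrying the generator of $H_2(S^2\times S^1)$ to $[S_+]$; likewise $f_3$ carries $[S^2\times S^1]$ isomorphically onto $H_3(X(S))$; and $f_1$ sends the meridian to the pair of generators of $H_1(X(S))$ and $H_1(D^3\times S^1)$, hence is injective with cokernel $\mathbb{Z}$. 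Each $f_i$ is injective, $f_2$ and $f_3$ are surjective, so assembling the short exact sequences $0\to\operatorname{coker}(f_i)\to H_i(W)\to\ker(f_{i-1})\to 0$ yields $H_i(W)\cong\mathbb{Z}$ for $i=0,1$ and $H_i(W)=0$ for $i\ge 2$, which is exactly $H_*(D^3\times S^1)$.

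The main obstacle is the bookkeeping of generators through the first Mayer--Vietoris sequence, and specifically the claim that $[S_+]$ generates $H_2(X(S))$ on the nose rather than a proper subgroup of it. This is precisely where Lemma \ref{Lem2} is indispensable: it is the fact that $[S]$ is a \emph{generator} of $H_2(W'')$ that makes the $T(S)$-summand map isomorphically onto $H_2(W'')$ and thereby forces $[S_+]$ to generate the exterior. Once these generators are pinned down, the second computation is the expected statement that surgering along $S$ removes the $2$-dimensional class and its dual $3$-dimensional class while reintroducing exactly one $1$-dimensional class, the core circle of $D^3\times S^1$.
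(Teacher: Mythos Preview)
Your argument is correct and follows essentially the same two-step strategy as the paper: first determine $H_*(X(S))$, then feed it into the Mayer--Vietoris sequence for $W=X(S)\cup(D^3\times S^1)$. The only cosmetic difference is that for the first step the paper uses the long exact sequence of the pair $(W'',X(S))$ together with excision $H_*(W'',X(S))\cong H_*(T(S),\partial T(S))$, whereas you use Mayer--Vietoris for $W''=X(S)\cup T(S)$; these are equivalent computations, and your explicit tracking of the generators $[S_+]$, $[S^2\times S^1]$, and the meridian is exactly what is needed to justify the injectivity/bijectivity claims for the maps $\Phi_i$ (your $f_i$) that the paper asserts more tersely.
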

\begin{proof}
The homology exact sequence of the pair $(W^{''}, X(S))$ yields:
\begin{alignat*}{10}
\cdots &\to H_{i}(X(S))\to H_{i}(W^{''})&\to H_{i}(W^{''}, X(S))&\\
            &\to H_{i-1}(X(S))\to~~\cdots &\to H_{0}(W^{''}, X(S))&\to 0.
\end{alignat*}
Via excision, $H_i(W^{''}, X(S))\cong H_i(T(S), \partial T(S))\cong \mathbb{Z}$ for $i=2, 4$, and trivial otherwise. Then we can easily obtain $H_0(X(S))\cong H_3(X(S))\cong \mathbb{Z}$ and $H_4(X(S))\cong0$. For $i=1, 2$, we have the following sequence:
$$0\to H_2(X(S))\to H_2(W^{''})\to H_2(W^{''}, X(S))\to H_1(X(S))\to 0,$$ where $[S]$ is mapped to $0$ under $H_2(W^{''})\to H_2(W^{''}, X(S))$. Thus, $H_1(X(S))\cong H_2(X(S))\cong \mathbb{Z}$.\

Now, we compute the homology of $W$ using the Mayer-Vietoris sequence for the pair $(X(S), D^3\times S^1)$. In the long exact sequence
\begin{alignat*}{2}
\cdots & \to H_{i}(S^2\times S^1)\xrightarrow{\Phi_i} H_{i}(X(S)) & {}\oplus H_{i}(D^3\times S^1) & \to H_{i}(W) \\
       & \to H_{i-1}(S^2\times S^1)\to\cdots  & \cdots          & \to H_{0}(W)\to 0,
\end{alignat*}
$\Phi_i$ is injective for $i=0, 1$, and bijective for $i=2, 3$, which implies that $W$ has a homology of $D^3\times S^1$.
\end{proof}

\begin{proof}[Proof of Theorem \ref{main}]
Let $(M, \alpha)$ be a distinguished homology handle with trivial Alexander polynomial. Then by above lemmas, there is a topological 4-dimensional manifold $W$ whose homology is isomorphic to that of $S^1\times D^3$, and whose boundary is $M$. Choose  a cohomology class $\varphi \in H^1(W)$ whose restriction to M is dual to $\alpha$. By \cite[Assertion 5]{M1}, the infinite cyclic covering $\widetilde{W}_{\varphi}$ has finitely generated homology groups over $\mathbb{Q}$ since $W$ has the homology of the circle. Thus the pair $(W, \varphi)$ is a null $\widetilde{H}$-cobordism of $(M, \alpha)$.

\end{proof}

\end{document}